\title[Pinned angles determined by Cartesian products]{A lower bound for the number of pinned angles determined by a Cartesian product set}
\author{}
\numberwithin{exercise}{subsection}
\newtheorem{lemma}{Lemma}
\newtheorem{theorem}{Theorem}
\newtheorem*{theorem*}{Theorem}
\author[O. Roche-Newton]{Oliver Roche-Newton} \address{Institute for Algebra, Johannes Kepler Universit\"{a}t\\
Linz, Austria}
\email{o.rochenewton@gmail.com}
\begin{document}

\begin{abstract}

We prove that, for any $B \subset \mathbb R$, the Cartesian product set $B \times B$ determines $\Omega(|B|^{2+c})$ distinct angles.
\end{abstract}

\maketitle

\section{Introduction}

Given three distinct points $p,q,r \in \mathbb R^2$, let $\mathcal A(p,q,r)$ denote the angle determined by the three points with $q$ as the centre. Given a set $P \subset \mathbb R^2$ with cardinality $|P|=n$, define
\[
\mathcal A(P):= \{ \mathcal A(p,q,r): p,q,r  \in P\}.
 \]
This paper is concerned with lower bounds for the size of the set $\mathcal A(P)$ as a function of $|P|$. As a starting point, observe that the bound
\begin{equation} \label{linear}
|\mathcal A(P)| \gg |P|
\end{equation}
holds for any set $P$ which is not contained in a line.\footnote{Throughout this note, the notation
 $X\gg Y$ and $Y \ll X,$ are equivalent and mean that $X\geq cY$ for some absolute constant $c>0$. In addition, we use the symbols $\gtrsim$ and $\lesssim$ to supress logarithmic factors. To be precise  $X\gtrsim Y$ and $Y \lesssim X,$ are equivalent and mean that $X\geq cY(\log Y)^{-c'}$ for some absolute constants $c,c'>0$.}  One method for proving \eqref{linear} is to use Beck's Theorem. If there are $\Omega(|P|)$ points from $P$ on a line $\ell$, we may fix any point $q \in  P \setminus \ell$ and a point $p \in P \cap \ell$, and observe that the angles
\[
\mathcal A(p,q,r), \,\, r \in P \cap \ell
\]
are distinct. Otherwise, it follows from Beck's Theorem that there is a point $q \in P$ which determines $\Omega(|P|)$ directions with the other elements of $P$, and this immediately implies that $q$ determines $\Omega(|P|)$ angles with these points, using $q$ as the centre of the angles.

The bound \eqref{linear} cannot be improved in general. For instance, one may consider a point set $P$ consisting of $n-1$ points distributed equally on a circle, along with the centre of the circle. A similar example can be constructed whereby $n-2$ points are on a single line, and the two additional points are arranged to be symmetric with respect to the line such that the directions to the points on the line form an arithmetic progression.

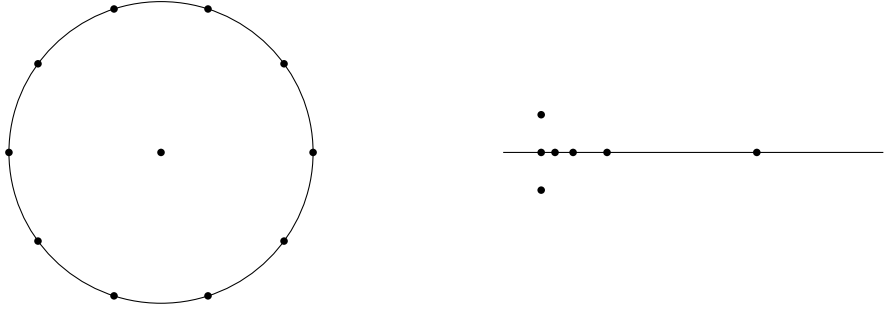
\begin{figure}

\begin{tikzpicture}
    \def\n{10}
    \def\r{2}
    \draw (0,0) circle (\r);
    \node[circle,fill,inner sep=1pt] at (0,0) {};
    \foreach \s in {1,...,\n}{
        \pgfmathsetmacro\angle{360/\n * (\s - 1)}
        \coordinate (P\s) at (\angle:\r);
        \node[circle,fill,inner sep=1pt] at (P\s) {};
    }

  \begin{scope}[shift={(5cm,0)}, scale=0.5]  \draw (-1,0) -- (9,0);
    \node[circle,fill,inner sep=1pt] at (0,1) {};
    \node[circle,fill,inner sep=1pt] at (0,-1) {};
    \node[circle,fill,inner sep=1pt] at (0,0) {};
    \node[circle,fill,inner sep=1pt] at (0.3639,0) {};
    \node[circle,fill,inner sep=1pt] at (0.8390,0) {};
    \node[circle,fill,inner sep=1pt] at (1.7320,0) {};
      \node[circle,fill,inner sep=1pt] at (    5.6712,0) {};

      \end{scope}

\end{tikzpicture}

\caption{This picture shows two degenerate examples sets of $n$ points determining $O(n)$ distinct angles. Essentially all of the examples that we are aware of for points sets with this property are derived from variations of these two configurations.}

\end{figure}


We would like to prove that, if we impose some additional conditions on $P$ to avoid these special configurations, then $|\mathcal A(P)| \gg |P|^{1+c}$ holds for some absolute constant $c>0$. To put this another way, we would ultimately like to classify the points sets which determine few angles. For context, let us consider the analogue of this question for the Erd\H{o}s distance problem. Let $\mathcal D(P):= \{ \| p - q\| :p,q \in P \}$ denote the set of distinct distances determined by a point set $P \subset \mathbb R^2$. A breakthrough work of Guth and Katz \cite{GK} established that the near-optimal bound
\[
| \mathcal D(P) | \gg \frac{|P| }{ \log |P|}
\]
holds for all $P \subset \mathbb R^2$. The problem of determining the structure of sets which have close to the minimum possible number of distinct distances has attracted considerable attention. It is expected that such sets have a lattice-type structure, and it was conjectured by Erd\H{o}s \cite{E} that an extremal example must contain $\Omega( \sqrt{|P|})$ points on a line, although very little is currently known for this family of problems. For the analogous question concerning the structure of sets which determine few congruence classes of triangles, see a recent paper of Mansfield and Passant \cite{MP}.

Some work on super linear bounds for distinct angles was carried out in a very recent work of Konyagin, Passant and Rudnev \cite{KPR}, in which they proved that the bound
\[
|\mathcal A(P)| \gtrsim  |P|^{1+\frac{3}{23}}
\]
holds under the condition that $P$ is in convex position and not all of the points lie on a circle. Going back a little further, Pach and Sharir \cite{PS} proved an optimal bound for the maximum number of representations of a single angle. It is conceivable, bearing in mind the two examples illustrated above, that a condition that no $n-2$ points of $P$ lie on a line or circle is enough to guarantee a better than linear bound for $|\mathcal A(P)|$. However, to the best of our knowledge, it is not even known that such a bound holds under the much stronger assumption that the point set $P$ is in general position (that is, no $3$ points lie on a line and no $4$ points lie on a circle).

In this paper, we consider something of an intermediate case raised in \cite{KPR}, where the point set $P=B \times B$ is a Cartesian product. In this case, we are able to improve the trivial estimate \eqref{linear}, as follows.

\begin{theorem} \label{thm:main}
 For any finite set $B \subset \mathbb R^2$,
\[
|\mathcal A(B \times B)| \gg |B|^{2+\frac{1}{14}}.
\]
Moreover, there exists $q \in B \times B$ such that
\[
|\{ \mathcal A(p,q,r) : p,r \in B \times B \}| \gg |B|^{2+\frac{1}{14}}.
\]
\end{theorem}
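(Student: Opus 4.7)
The plan is to find a pivot $q\in B\times B$ from which $\gg |B|^{2+1/14}$ distinct angles are subtended; this simultaneously establishes both conclusions of the theorem. For a candidate pivot $q=(q_1,q_2)$, write $A=B-q_1$ and $C=B-q_2$; the slopes from $q$ to the remaining points of $B\times B$ form the ratio set $S_q=C/A\subset\mathbb{R}\cup\{\infty\}$. The angle at $q$ between two lines of slopes $s,t$ equals $\arctan s-\arctan t$ modulo $\pi$, so the number of angles subtended at $q$ equals $|\arctan(S_q)-\arctan(S_q)|$ measured modulo $\pi$; equivalently it equals $|\phi(S_q,S_q)|$ for the rational function $\phi(s,t)=(s-t)/(1+st)$. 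The problem thus reduces to a lower bound for this quantity.

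The strategy has two parts. Step 1: a sum-product argument should produce some pivot $q\in B\times B$ for which $|S_q|\gg |B|^{2-\epsilon}$, for a small absolute $\epsilon>0$. Heuristically, if $(B-q_2)/(B-q_1)$ were small for every $(q_1,q_2)\in B^2$, then pigeonholing over translates and invoking a modern sum-product estimate (for example, Rudnev--Stevens or Shkredov) would force $B$ to carry incompatible additive and multiplicative structure, and so at least one pivot must have a large slope set. Step 2: expand from slopes to angles, i.e.\ show that $|\phi(S_q,S_q)|\gg |S_q|^{1+\delta}$ for some $\delta>0$ whenever $|S_q|$ is close to $|B|^2$. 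Since the trivial Pl\"unnecke bound only gives $|\phi(S_q,S_q)|\geq |S_q|$, a polynomial improvement is needed.

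The main obstacle is Step 2. The rational function $\phi(s,t)=\tan(\arctan s-\arctan t)$ is degenerate in the Elekes--R\'onyai/Elekes--Szab\'o sense: the variety $\alpha=\phi(s,t)$ corresponds, via the Cayley-type substitution $s=\tan\theta$, to the group law on $\mathbb{R}/\pi\mathbb{Z}$, which is precisely the case excluded from those expansion theorems. Any gain over the trivial $|\phi(S_q,S_q)|\geq|S_q|$ must therefore come from exploiting the additional structure of $S_q=C/A$ as a ratio set of two translates of $B$. The natural route is to combine Pl\"unnecke--Ruzsa with Freiman-type structural theorems: a near-equality in the trivial bound would force $\arctan(S_q)$ into a short generalized arithmetic progression, hence $S_q$ into tangents of such a progression, which together with the parametrization $S_q=C/A$ and the Cartesian-product structure of $B\times B$ pins down a very rigid configuration for $B$ that should be ruled out in general. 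Balancing the loss $\epsilon$ from Step 1 against the structural gain in Step 2 is what presumably produces the specific exponent $1/14$.
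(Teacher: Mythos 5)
Your reduction to finding a single pivot $q$ with a large slope set, and your diagnosis that the map $(s,t)\mapsto \arctan s-\arctan t$ is Elekes--R\'onyai degenerate (it is the group law on $\mathbb R/\pi\mathbb Z$ in disguise, so no generic expansion theorem applies), both match the actual structure of the problem. But neither of your two steps is carried out, and the second points in a direction that is unlikely to succeed. For Step 1 you do not need any sum-product input and you should not accept an $\epsilon$ loss: a bipartite version of Beck's theorem (provable directly from Szemer\'edi--Trotter) produces a pivot $(a,b)\in B_1\times B_1$ determining $\gg|B|^2$ distinct directions to $B_2\times B_2$, where $B_1$ and $B_2$ are the lower and upper halves of $B$; this split also guarantees that all relevant slopes are positive, which is needed for the logarithm below.

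The genuine gap is Step 2. An inverse (Freiman-type) argument from near-equality in Pl\"unnecke yields only very weak quantitative structure, and ``pins down a very rigid configuration that should be ruled out in general'' is not an argument; there is no visible route from it to a clean polynomial gain like $1/14$. The working mechanism exploits the ratio-set structure analytically rather than structurally: since the slope set is exactly $(B_2-b)/(B_2-a)$ with all entries positive, one writes $\arctan\left(\frac{B_2-b}{B_2-a}\right)=f(X-Y)$ with $X=\log(B_2-b)$, $Y=\log(B_2-a)$ and $f(x)=\arctan(e^x)$, and then invokes a convexity/squeezing expansion theorem (Theorem 2.6 of Hanson--Roche-Newton--Senger) to obtain $|4f(X-Y)-3f(X-Y)|\gtrsim |B|^{5/2}$. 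Pl\"unnecke--Ruzsa converts this into $|S-S|^{7}/|S|^{6}\gtrsim|B|^{5/2}$ for $S=\arctan\left(\frac{B_2-b}{B_2-a}\right)$, and since $|S|\gg|B|^{2}$ by Beck's theorem, $|S-S|\gtrsim|B|^{29/14}=|B|^{2+1/14}$. The exponent thus comes from the $7$ in Pl\"unnecke and the $5/2$ in the convexity bound, not from balancing a loss in Step 1 against a gain in Step 2. Without an expansion input of this concrete kind, your proposal does not get past the trivial quadratic bound.
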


In terms of the techniques at our disposal, the advantage of restricting to the case when $P$ is a Cartesian product is that it allows us to convert this geometric problem into an arithmetic question in the spirit of the sum-product problem. The problem reduces to a question about growth of a given set under a combination of additive and convex operations. Such questions have been studied using incidence geometry, going back to the work of Elekes, Nathanson and Ruzsa \cite{ENR}. Recently, an elementary ``squeezing'' argument has led to new progress in this direction; see, for example, \cite{RSSS}, \cite{HRNR} and \cite{B}. Of particular relevance here is the work of \cite{HRNS}, where these techniques were used to prove superquadratic growth estimates for convex expanders. The proof of Theorem \ref{thm:main} is also built upon applications of the squeezing argument.

\section{Preliminary results}

In the proof of Theorem \ref{thm:main}, we will use the following bipartite variant of Beck's Theorem. The result can be proved by a simple adaptation of the well-known proof of Beck's Theorem via an application of the Szemer\'{e}di-Trotter Theorem. For the convenience of the reader, a full proof is given below. The statement can also be derived as an immediate corollary of Theorem 1.9 from a recent paper of Lund, Pham and Thu \cite{LPT}.

\begin{theorem} \label{thm:beck}
There exists a sufficiently small absolute constant $c>0$ such that the following statement holds. Let $P$ and $Q$ be disjoint sets of $N$ points in $\mathbb R^2$ and suppose that at most $cN$ points from $Q$ are collinear. Then there exists $p=(p_1,p_2) \in P$ such that the set of directions
\[
\left \{ \frac{p_2-q_2}{p_1-q_1} : (q_1,q_2) \in Q \right \}
\]
has cardinality at least $cN$.
\end{theorem}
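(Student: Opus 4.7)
The plan is to adapt the standard Szemer\'edi--Trotter proof of Beck's theorem to the bipartite setting. Let $L$ denote the set of lines meeting both $P$ and $Q$, and for each $\ell \in L$ set $a_\ell = |P \cap \ell|$ and $b_\ell = |Q \cap \ell|$. Since $P$ and $Q$ are disjoint, each of the $N^2$ pairs $(p,q) \in P \times Q$ lies on a unique line of $L$, so
\[
\sum_{\ell \in L} a_\ell b_\ell \;=\; N^2.
\]
Writing $D(p)$ for the number of directions from $p$ to $Q$, one has $\sum_{p \in P} D(p) = \sum_{\ell \in L} a_\ell$. Since $|P|=N$, averaging reduces the task to establishing a lower bound $\sum_{\ell \in L} a_\ell \ge c' N^2$ for some absolute $c' > 0$; this immediately produces some $p \in P$ with $D(p) \ge c' N$.

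To obtain such a lower bound, I would fix a large absolute constant $K$ and split $L$ into \emph{thin} lines ($b_\ell \le K$) and \emph{thick} lines ($b_\ell > K$). The trivial estimate
\[
\sum_{\ell \in L,\, b_\ell \le K} a_\ell \;\ge\; \frac{1}{K}\sum_{\ell \in L,\, b_\ell \le K} a_\ell b_\ell
\]
shows that, if the thick portion contributes at most $N^2/2$ to $\sum_\ell a_\ell b_\ell$, then the thin portion contributes at least $N^2/2$, yielding $\sum_\ell a_\ell \ge N^2/(2K)$; we are then done with $c = 1/(2K)$.

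To bound the thick portion, I would dyadically decompose into $L_i = \{\ell : 2^i \le b_\ell < 2^{i+1}\}$, with the range $\log K \le i \le \log(cN)$ controlled by the hypothesis that no line contains more than $cN$ points of $Q$. Szemer\'edi--Trotter applied to $Q$ controls the size of each dyadic class via
\[
|L_i| \;\ll\; \frac{N^2}{2^{3i}} + \frac{N}{2^i},
\]
and then Szemer\'edi--Trotter applied a second time to $P$ against the lines in $L_i$ gives
\[
\sum_{\ell \in L_i} a_\ell \;\ll\; N^{2/3} |L_i|^{2/3} + N + |L_i|.
\]
Combining these with $a_\ell b_\ell \le 2^{i+1} a_\ell$ and summing over $i$ produces a bound on the thick contribution of the shape $O(N^2/K) + O(c^{1/3} N^{5/3}) + O(cN^2) + O(N \log N)$. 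Choosing $K$ a large absolute constant and $c$ a small absolute constant (also satisfying $c \le 1/(2K)$) makes each of these four terms at most $N^2/8$ for $N$ beyond an absolute threshold $N_0$. For $N \le N_0$ the conclusion is trivial once $c \le 1/N_0$, so a single small $c$ works in all cases.

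The main technical obstacle is managing the two-stage Szemer\'edi--Trotter computation. The bound $|L_i| \ll N^2/2^{3i} + N/2^i$ has two terms whose dominance switches at $2^i \sim \sqrt{N}$, and this crossover must be tracked carefully through the second application of ST to be certain that no intermediate dyadic range produces a contribution of order $N^2$ that would ruin the estimate. The computation is routine but fiddly; no new ideas are needed beyond book-keeping.
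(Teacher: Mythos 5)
Your proposal is correct and follows essentially the same route as the paper: the identity $\sum_\ell a_\ell b_\ell = N^2$, a dyadic decomposition of the lines by their $Q$-richness, two applications of Szemer\'edi--Trotter (once to bound the number of $t$-rich lines of $Q$, once to bound their incidences with $P$), and a threshold constant ($K$, playing the role of $2^{k_0}$ in the paper) below which lines are declared thin. The only cosmetic difference is at the end, where you average $\sum_\ell a_\ell = \sum_p D(p)$ over $p$ while the paper pigeonholes on pairs and then divides by the line multiplicity; the crossover term $N^{4/3}2^{i/3}$ you flag is handled identically in both arguments and causes no trouble.
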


\begin{proof}
    Given $S \subset \mathbb R^2$, and an integer $t \geq 2$, let $L_t(S)$ denote the set of lines
    \[
    L_t(S): \{ \ell:  |\ell \cap S| \geq t \}.
    \]
    It follows from the Szemer\'{e}di-Trotter Theorem that
    \begin{equation} \label{STcor}
    |L_t(S)| \ll \frac{|S|^2}{t^3}+ \frac{|S|}{t}.
    \end{equation}

    Now, for two distinct points $p \in P$ and $q \in Q$, let $\ell_{p,q}$ denote the line connecting $p$ and $q$. We say that $p$ and $q$ are \textit{$k$-connected} if
    \[
    2^k \leq |\ell_{p,q} \cap Q| < 2^{k+1}.
    \]
    Note that that every pair $(p,q) \in P \times Q$ is $k$-connected for exactly one integer $k \geq 0$, and so
    \[
    N^2= \sum_{k \geq 0} | \{ (p,q) \in P \times Q:\text{ $p$ and $q$ are $k$-connected}\}|.
    \]
    We will prove the theorem with the absolute constant $c$ defined to be
    \begin{equation} \label{Cdef}
    c:=\frac{1}{33C'},
    \end{equation}
    where $C'$ is another absolute constant arising from applications of the Szemer\'{e}di-Trotter Theorem.
    
    
    Suppose for a contradiction that at most $\frac{N^2}{2}$ of the pairs in $P \times Q$ are $k$-connected with $k \leq k_0$ for some sufficiently large absolute constant $k_0$ to be specified later. In particular, and using also the assumption that no more than $cN$ points of $Q$ lie on any line, we have
    \begin{align*}
    \frac{N^2}{2} &\leq \sum_{k = k_0+1}^{\lceil \log_2(cN) \rceil+1} | \{ (p,q) \in P \times Q:\text{ $p$ and $q$ are $k$-connected}\}|
    \\& = \sum_{k = k_0+ 1}^{\lceil \log_2(cN) \rceil+1} \sum_{ \,\, \ell : 2^k \leq | \ell \cap Q|<2^{k+1}} |\ell \cap P| \cdot | \ell \cap Q|
    \\& < \sum_{k = k_0+ 1}^{\lceil \log_2(cN) \rceil+1} 2^{k+1} \sum_{ \,\, \ell : 2^k \leq | \ell \cap Q|<2^{k+1}} |\ell \cap P| 
    \\& \leq   \sum_{k = k_0+ 1}^{\lceil \log_2(cN) \rceil+1} 2^{k+1} \cdot  I(P, L_{2^k}(Q))
    \\& \leq C \sum_{k = k_0+ 1}^{\lceil \log_2(cN) \rceil+1} 2^k\left ( |P|^{2/3}|L_{2^k}(Q)|^{2/3} +|P| + |L_{2^k}(Q)| \right )
    \\& \leq C'\sum_{k = k_0+ 1}^{\lceil \log_2(cN) \rceil+1} \left ( \frac{N^2}{2^k} + 2^k N   \right )
    \\& \leq C' \left ( \frac{N^2}{2^{k_0}} + 8cN^2 \right ),
    \end{align*}
    where $C$ and $C'$ are absolute constants coming from applications of the Szemer\'{e}di-Trotter Theorem, and $C'$ is the constant which appeared in the definition of $c$ back in \eqref{Cdef}.

    By choosing $k_0$ to be sufficiently large (concretely, we can set $k_0 := \lceil \log_2(4C') \rceil $), it follows that the first term on the right hand side can be absorbed into the left hand side to give
    \[
    \frac{N^2}{4} \leq 8C'cN^2 =\frac{8}{33}N^2,
    \]
    obtaining the intended contradiction.

    Therefore, we may restrict our attention to the case when at least $\frac{N^2}{2}$ of the pairs in $P \times Q$ are $k$-connected for some $k \leq k_0$. It follows from the pigeonhole principle that there is some $p \in P$ such that
    \[
    |\{ q \in Q : \text{$p$ and $q$ are $k$-connected for some $k \leq k_0$} \}| \geq \frac{N}{2}.
    \]
    It then follows that the number of directions between $p$ and $Q$ is at least
    \[
    \frac{N}{2^{k_0+2}} \geq \frac{N}{32C'} > cN.
    \] 
\end{proof}

The key ingredient for the proof of Theorem \ref{thm:main} is the following new result. 
\begin{theorem} \label{thm:key}
For any finite sets $X,Y, \subset \mathbb R$
\[
|4f(X-Y) - 3f(X-Y)| \gtrsim \min \{ |X|,|Y| \}^{5/2},
\]
where the function $f$ is given by the formula $f(x)=\arctan(e^x)$.
\end{theorem}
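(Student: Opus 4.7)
My plan is to adopt an Elekes-style incidence approach, combined with a Cauchy--Schwarz/pigeonhole reduction. Write $n = \min\{|X|, |Y|\}$ and $K := |4f(X-Y) - 3f(X-Y)|$; the goal is to show $K \gtrsim n^{5/2}$. The overall idea is that, while the difference set $X - Y$ may collapse to something much smaller than $|X||Y|$, the Cartesian product $X \times Y$ provides many labelled representations of each of its elements, and applying $f$ in the particular $4$-$3$ configuration turns this abundance of representations into incidences between points and curves to which Szemer\'{e}di--Trotter can be applied.

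The first step is a pigeonhole/Cauchy--Schwarz reduction. Each value in $4f(X-Y) - 3f(X-Y)$ is realised by a signed sum of seven values $f(x_i - y_i)$, with the total number of labelled 7-tuples being $|X|^7|Y|^7$. Dyadic pigeonholing yields a popular value $s^*$ with $\gtrsim |X|^7 |Y|^7 / K$ representations as such a tuple, reducing the problem to upper bounding the number of solutions of an equation of the form
\[
f(x_1 - y_1) + f(x_2 - y_2) + f(x_3 - y_3) + f(x_4 - y_4) - f(x_5 - y_5) - f(x_6 - y_6) - f(x_7 - y_7) = s^*.
\]

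The second step is to freeze several of the variables in order to extract a low-dimensional equation amenable to incidence geometry. For example, after fixing appropriate tuples $(x_i, y_i)$ for $i \in \{3,4,5,6,7\}$, the remaining equation in $(x_1, y_1, x_2, y_2)$ takes the shape $f(x_1 - y_1) + f(x_2 - y_2) = C$ for some $C$ determined by the frozen variables. For each choice of $(x_1, x_2) \in X^2$, this defines a curve in the $(y_1, y_2)$-plane, and the points of interest are those in $Y \times Y$. Applying Szemer\'{e}di--Trotter to these points and to the curves parameterised by $(x_1, x_2, C)$, and then balancing the parameters in the spirit of Elekes' classical proof of $|A+A|\cdot |A \cdot A| \gg |A|^{5/2}$, should give the desired upper bound on the number of solutions; combining this with the lower bound from the first step then yields $K \gtrsim n^{5/2}$.

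The main obstacle is verifying that the family of curves obtained in this way forms a pseudo-line system on which Szemer\'{e}di--Trotter can legitimately be used. The specific function $f(x) = \arctan(e^x)$ satisfies $f'(x) = 1/(2\cosh x)$, so it is strictly concave on $(0, \infty)$ and strictly convex on $(-\infty, 0)$, with an inflection at the origin; one will likely need to split $X - Y$ into its positive and negative parts, also exploiting the identity $f(x) + f(-x) = \pi/2$ to reduce to the concave half. Real analyticity of $f$ then ensures that any two distinct curves in the family meet in a bounded number of points, which is the key input needed to run the squeezing/incidence argument and to extract the $5/2$ exponent. The specific $4$-$3$ split (as opposed to a more symmetric $k$-$k$ split) should appear naturally here as the one that provides exactly the right number of free parameters to run the Elekes-type optimisation at the $n^{5/2}$ threshold.
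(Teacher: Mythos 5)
There is a genuine gap here: the incidence route you propose cannot reach the exponent $5/2$, and the obstruction is quantitative, not technical. Run the numbers on your own scheme with $|X|=|Y|=n$: the popular value $s^*$ has at least $n^{14}/K$ representations; freezing five of the seven pairs costs a factor $n^{10}$, and Szemer\'{e}di--Trotter applied to the $n^2$ points of $Y\times Y$ and the $n^2$ curves indexed by $(x_1,x_2)$ bounds the number of solutions of $f(x_1-y_1)+f(x_2-y_2)=C$ by $O\bigl((n^2)^{2/3}(n^2)^{2/3}+n^2\bigr)=O(n^{8/3})$. This yields $n^{14}/K\ll n^{10}\cdot n^{8/3}$, i.e.\ $K\gg n^{4/3}$ --- far short of $n^{5/2}$. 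No choice of which variables to freeze, and no Cauchy--Schwarz passage to energies, repairs this: to conclude $K\gtrsim n^{5/2}$ you would need the popular value to have at most about $n^{23/2}$ representations, and bounds of that superquadratic strength for convex images are exactly what Szemer\'{e}di--Trotter-based arguments in the Elekes--Nathanson--Ruzsa \cite{ENR} / Schoen--Shkredov tradition have never produced; they stall at the quadratic barrier. The decisive test case is $X=Y=\{1,\dots,n\}$, where $f(X-Y)$ is a ``convex set'' of size about $2n$ and the theorem amounts to superquadratic growth of a $(4,3)$-fold sumset of a convex set. Two further points: your guess that the $4$--$3$ split is what makes an Elekes-type optimisation land exactly at $5/2$ is not right, and the pseudoline verification you defer to ``real analyticity'' is not automatic (analyticity alone does not give a uniform bound on pairwise intersections across a family).

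The paper takes a different, elementary route built on the squeezing method rather than incidences. It quotes the expander theorem of Hanson--Roche-Newton--Senger \cite{HRNS} (Theorem \ref{thm:MainExpander}): for shifts $h_1<h_2<h_3$ and a set $A$ whose consecutive gaps all exceed $\max\{h_2-h_1,h_3-h_2\}$, the product $|2f(A+h_1)-2f(A+h_1)+2f(A+h_2)-f(A+h_2)|\cdot|2f(A+h_3)-2f(A+h_3)+2f(A+h_2)-f(A+h_2)|$ is $\gg |A|^5/(\log|A|)^3$, provided a certain parametric curve built from $f$ is strictly concave. One orders $X$ and $Y$, finds the minimal ``second-neighbour'' gap $x_{k+2}-x_k$ or $y_{k+2}-y_k$ over both sets, takes the corresponding three consecutive elements (of $X$ or of $-X$) as $h_1,h_2,h_3$, and takes $A$ to consist of every other element of $\mp Y$ (resp.\ $X$) so that the spacing hypothesis holds; both sets in the product lie inside $4f(X-Y)-3f(X-Y)$, so one of them has size $\gtrsim\min\{|X|,|Y|\}^{5/2}$. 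Note that the $4$--$3$ signature is inherited from the conclusion of that theorem, and the convexity input is not convexity of $f=\arctan(e^x)$ itself (which has an inflection at $0$) but strict concavity of the curve $t\mapsto\bigl(f(t+h_2)-f(t+h_1),\,f(t+h_3)-f(t+h_2)\bigr)$, checked by a direct second-derivative computation.
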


Theorem \ref{thm:key} can be derived from Theorem 2.6 in \cite{HRNS}. A special case of this result which is tailored slightly to our purposes is stated below.

\begin{theorem}\label{thm:MainExpander}
Let $f: \mathbb R\to\mathbb R$ be a continuous, strictly increasing and strictly convex function, and let $h_3>h_2>h_1$ be real numbers. Suppose that the curve
\begin{equation} \label{impy}
 \{(f(t+h_2)-f(t+h_1),f(t+h_3)-f(t+h_2)) : t\in \mathbb R\}
\end{equation}
is the graph of a strictly convex or strictly concave function.  Suppose that $A=\{a_1<\ldots<a_N\}$ is a finite set of positive real numbers satisfying the spacing condition
\[
\min\{a_{i+1}-a_i:i\in[N-1]\}\geq \max\{h_2-h_1, h_3-h_2\}.
\] 
Then
\begin{multline*}|2f(A+h_1)-2f(A+h_1)+2f(A+h_2)-f(A+h_2)|\cdot\\\cdot|2f(A+h_3)-2f(A+h_3)+2f(A+h_2)-f(A+h_2)| \gg \frac{N^5}{(\log N)^3}.
\end{multline*}
\end{theorem}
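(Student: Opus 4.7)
The plan is to invoke the squeezing machinery of \cite{HRNS}, specialised to the three-shift setting. Writing $X = f(A+h_1)$, $Y = f(A+h_2)$, $Z = f(A+h_3)$, and attaching to each $a_i \in A$ the point
\[
p_i = (x_i, y_i) := \bigl(f(a_i + h_2) - f(a_i + h_1),\; f(a_i + h_3) - f(a_i + h_2)\bigr),
\]
the hypothesis on the parametrized curve says that all $p_i$ lie on a strictly convex (or strictly concave) curve $\Gamma$. Strict convexity of $f$, together with the spacing condition $a_{i+1} - a_i \geq \max\{h_2-h_1,\, h_3-h_2\}$, makes the mean-value representatives of successive increments disjoint, so $x_1 < \cdots < x_N$ and $y_1 < \cdots < y_N$; thus $\{p_i\}$ are the vertices of a strictly convex polygon inscribed in $\Gamma$.

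The next step is to place chord-differences inside the target sumsets. For $i < j$, the increment
\[
u_{ij} := x_j - x_i = f(a_j+h_2) - f(a_j+h_1) - f(a_i+h_2) + f(a_i+h_1)
\]
belongs to $X - X + Y - Y$. Using the trivial containment $X - X \subseteq 2X - 2X$ (write $a - b = (a+c) - (b+c)$) and $Y \subseteq 2Y - Y$, the quantity $u_{ij}$, after adding a fixed inert constant, represents an element of $2X - 2X + 2Y - Y$; similarly $v_{ij} := y_j - y_i$ is, up to the analogous inert constant, an element of $2Z - 2Z + 2Y - Y$. The heart of the argument is now a double count of ``parallel chord'' configurations on $\Gamma$: for each fixed $p_i$ the $N - 1$ chord slopes $v_{ij}/u_{ij}$ with $j \neq i$ are pairwise distinct (since a line meets $\Gamma$ in at most two points), and this translates into an $L^2$-energy lower bound via the Szemer\'edi--Trotter incidence theorem applied to the $\binom{N}{2}$ chords and a family of concurrent pencils in slope space. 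After Cauchy--Schwarz and a dyadic decomposition of the increments $u_{ij}$ and $v_{ij}$ over scales, one should arrive at
\[
|2X - 2X + 2Y - Y| \cdot |2Z - 2Z + 2Y - Y| \gtrsim \frac{N^5}{(\log N)^3},
\]
with the three logarithmic losses arising from the two dyadic pigeonholes and one application of Szemer\'edi--Trotter.

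The main obstacle I anticipate is the incidence step. A single slope on a strictly convex curve can be realised by many disjoint parallel chords, so the naive ``all $\binom{N}{2}$ slopes are distinct'' heuristic is false; one must use the strict convexity of the curve $\Gamma$ itself, not merely of $f$, to bound the multiplicity with which a pair $(u, v)$ is attained as a chord increment, and to extract a polynomial (rather than merely logarithmic) saving strong enough to reach $N^5$. The spacing hypothesis is indispensable here, since it keeps the three shifted copies $A+h_1$, $A+h_2$, $A+h_3$ interleaved along the real line, so that the monotone order induced by $f$ translates cleanly into the order statistics of the $u_{ij}$ and $v_{ij}$, and the padded sumset representations do not collide more than polylogarithmically often. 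Managing this bookkeeping uniformly across dyadic scales is the technical heart of the argument.
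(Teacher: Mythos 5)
There is a decisive counting gap in your plan. Every element you propose to place in the factor $|2f(A+h_1)-2f(A+h_1)+2f(A+h_2)-f(A+h_2)|$ is of the form $u_{ij}+c$ with $u_{ij}=x_j-x_i$ a pairwise difference of the $N$ abscissae $x_1,\dots,x_N$, and similarly for the other factor with the $v_{ij}$. There are at most $\binom{N}{2}$ such differences in each coordinate, so this scheme can never certify more than roughly $N^2$ elements in either set, hence at most roughly $N^4$ for the product. The theorem asserts a product bound of $N^5/(\log N)^3$, which forces at least one factor to exceed $N^{5/2}$, i.e.\ to be \emph{superquadratic} in $N$. No argument that only exhibits translated pairwise differences of $N$ real numbers can reach that threshold, no matter how the incidence step is executed; the shortfall is structural, not a matter of bookkeeping. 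Relatedly, the Szemer\'edi--Trotter route you sketch is essentially the Elekes--Nathanson--Ruzsa/Schoen--Shkredov approach, which for convexity problems of this type is known to stall at exponents like $|A+A-A|\gg N^2$; the introduction of this paper explicitly contrasts that with the newer elementary method precisely because the latter goes further.

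For the record, the paper does not prove this theorem at all: it is quoted as a special case of Theorem 2.6 of [HRNS]. The proof there is the elementary ``squeezing'' argument: one orders the points $p_i$ on the convex curve, uses the spacing hypothesis to show that the second-order increments attached to one index can be squeezed strictly between consecutive first-order increments attached to another, and thereby nests $\gg N$ new elements inside each of $\gg N$ disjoint gaps; an $L^2$/second-moment step (the source of the logarithms) then converts this nesting into the product bound $N^5/(\log N)^3$. It is exactly this nesting of longer sum--difference expressions inside gaps --- using the full seven-fold structure $2X-2X+2Y-Y$ rather than a single difference --- that lets a single factor grow past $N^2$. If you want to salvage your write-up, the parallel-chord/incidence step should be replaced by this squeezing-plus-energy mechanism.
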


To apply Theorem \ref{thm:MainExpander} with the function $f(x)= \arctan(e^x)$, we need to verify the technical condition that the parametrically defined curve \eqref{impy} is concave. This is the content of the following lemma.

\begin{lemma}
Let $f(x)=\arctan(e^x)$ and suppose that $h_1<h_2<h_3$ are real numbers. The curve
\[
 \{(f(t+h_2)-f(t+h_1),f(t+h_3)-f(t+h_2)) : t\in \mathbb R\}
 \]
 is the graph of a strictly concave function.
\end{lemma}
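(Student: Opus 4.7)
The plan is to verify the concavity condition by reducing the second derivative of the parametric curve to the signed area of a triangle on an auxiliary planar curve, and then showing that this signed area has constant sign.

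Writing $a_i := t + h_i$ and $g := f'$, one has $u'(t) = g(a_2) - g(a_1)$, $v'(t) = g(a_3) - g(a_2)$, and analogously $u''(t) = g'(a_2) - g'(a_1)$, $v''(t) = g'(a_3) - g'(a_2)$. A short algebraic manipulation then yields
\[
u'(t)v''(t) - v'(t)u''(t) \;=\; \det\begin{pmatrix}1 & 1 & 1\\ g(a_1)&g(a_2)&g(a_3)\\ g'(a_1)&g'(a_2)&g'(a_3)\end{pmatrix},
\]
and the right-hand side equals twice the signed area of the triangle in $\mathbb{R}^2$ with vertices $(g(a_i), g'(a_i))$ for $i = 1, 2, 3$. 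Concavity of the original parametric curve thus reduces to showing that this signed area has a constant nonzero sign as $a_1 < a_2 < a_3$ vary.

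For $f(x) = \arctan(e^x)$ we have $g(x) = \tfrac{1}{2}\operatorname{sech}(x)$, and the geometric claim I will establish is that the auxiliary curve $\Gamma := \{(g(x), g'(x)) : x \in \mathbb{R}\}$, together with its unique limit point at the origin, is a simple closed curve bounding a strictly convex region of the plane. Using the identity $\operatorname{sech}^2 + \tanh^2 = 1$, one finds that $\Gamma$ satisfies the implicit equation $w^2 = y^2(1 - 4y^2)$, consisting of the two arcs $w = \pm y\sqrt{1 - 4y^2}$ for $y \in [0, 1/2]$. A direct second-derivative computation yields $(y\sqrt{1-4y^2})'' = \tfrac{4y(8y^2 - 3)}{(1-4y^2)^{3/2}} < 0$ on $(0, 1/2)$, so the upper arc is strictly concave and (by symmetry) the lower arc strictly convex, and the parametrization $x \mapsto (g(x), g'(x))$ is injective and traces $\Gamma$ in a consistent direction.

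With the strict convexity of $\Gamma$ in hand, a line meets $\Gamma$ in at most two points, so no three distinct points on $\Gamma$ are collinear. Consequently the signed area is a continuous nowhere-vanishing function of $(a_1, a_2, a_3)$ on the connected open region $\{a_1 < a_2 < a_3\} \subset \mathbb{R}^3$, and therefore has constant sign. This delivers the one-sided bending of $(u(t), v(t))$ that underlies the concavity condition in Theorem~\ref{thm:MainExpander}. I expect the main technical step to be the verification of the strict convexity of $\Gamma$, in particular checking that the upper and lower arcs glue at the origin into a genuine simple closed curve without hidden self-intersections and sorting out the sign conventions to conclude concavity rather than convexity; the determinant identity and the connectedness argument are fairly mechanical by comparison.
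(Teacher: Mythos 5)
Your route is genuinely different from the paper's. The paper simply computes $\frac{d^2y}{dx^2}$ for the parametrised curve and reads off its sign, whereas you isolate the Wronskian-type numerator $u'v''-v'u''$, identify it with twice the signed area of the triangle with vertices $(f'(t+h_i),f''(t+h_i))$, and deduce its non-vanishing from the strict convexity of the auxiliary loop $\Gamma=\{(f'(x),f''(x))\}$ cut out by $w^2=y^2(1-4y^2)$. These computations all check out: $f'=\tfrac12\operatorname{sech}$, the determinant identity, the strict concavity of the upper arc of $\Gamma$, the injectivity of $x\mapsto(f'(x),f''(x))$, and hence the conclusion that no line meets $\Gamma$ in three points and that $u'v''-v'u''$ has a constant non-zero sign. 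This is more conceptual and more robust than the paper's brute-force differentiation, and it isolates exactly the geometric content of the condition, namely that no three points of the parametrised curve are infinitesimally collinear.

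However, the final reduction --- ``concavity of the parametric curve reduces to the constant sign of the signed area'' --- has a real gap, and it is exactly the point you flagged as needing care. The sign of $\frac{d^2y}{dx^2}$ is that of $(u'v''-v'u'')/(u')^3$, so you also need $u'$ to keep a constant sign. Here $u'(t)=\tfrac12\bigl(\operatorname{sech}(t+h_2)-\operatorname{sech}(t+h_1)\bigr)$ vanishes and changes sign at $t=-(h_1+h_2)/2$ because $\operatorname{sech}$ is even and unimodal, while $v'$ does not vanish there; so the curve has a vertical tangent at which $u$ attains an interior maximum and the curve doubles back. Indeed $(u(t),v(t))$ tends to the origin as $t\to\pm\infty$ and traces a closed convex loop, not the graph of a function (for $h_i=0,1,2$ the abscissa $u\approx 0.326$ is hit with two distinct ordinates), so the discrepancy cannot be repaired by sorting out sign conventions. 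What your argument genuinely proves is that the curve is locally convex and meets every line in at most two points. I would not weigh this heavily against you relative to the paper: the paper's own displayed formula for $\frac{d^2y}{dx^2}$ has an everywhere-positive denominator and so cannot be the second derivative along the curve (it is missing the $(u')^3$ factor and stays finite where the true derivative blows up), and the two-points-per-line property you establish is what the squeezing machinery behind Theorem \ref{thm:MainExpander} actually consumes. But as a proof of the lemma as literally worded, your argument --- like the paper's --- stops short, and the write-up should state and use the locally convex formulation rather than the graph-of-a-concave-function one.
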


\begin{proof}
A direct and slightly laborious calculation gives
\[
 \frac{d^2y}{dx^2}=\frac{2(1+e^{2(t+h_3)})(1+e^{2(t+h_1)})(e^{t+h_3}-e^{t+h_2})(e^{2(t+h_1)}-e^{2(t+h_3)})}{(e^{t+h_2}-e^{t+h_1})(1+e^{2(t+h_3)})^2(e^{t+h_2}-e^{t+h_1})}.
 \]
 This is strictly negative for all $t$. Indeed, all of the factors are strictly positive for all $t$, with the exception of
 \[
 e^{2(t+h_1)}-e^{2(t+h_3)},
 \]
 which is always negative. Since the second derivative is strictly negative, it follows that the curve is the graph of a strictly concave function.

\end{proof}

\begin{proof}[Proof of Theorem \ref{thm:key}]


Label the elements of $X=\{x_1 < \dots < x_m \}$ and $Y=\{y_1 < \dots < y_n \}$ in ascending order. Consider the set of all nearly neighbouring distances
\[
U=\{x_{k+2} - x_k:  1 \leq k \leq m-2 \} \cup \{y_{k+2} - y_k:  1 \leq k \leq n-2 \}
\]
and let $u$ denote the minimal element of $U$. 

\textbf{Case 1} - Suppose that $u=x_{k+2} - x_k$ for some $1 \leq k \leq n-2$.  
Let 
\[
A=\left \{ -y_{2k} : 1 \leq k \leq \frac{n}{2} \right \}.
\]
Apply Theorem \ref{thm:MainExpander} with this $A$ and with
\[
h_1=x_k, \, h_2=x_{k+1}, \, h_3=x_{k+2}.
\]
It follows that
\[
|4f(X-Y)-3f(X-Y)| \gtrsim |Y|^{5/2}.
\]

\textbf{Case 2} - Suppose that $u=y_{k+2} - y_k$ for some $1 \leq k \leq m-2$.  
Let 
\[
A=\left \{ x_{2k} : 1 \leq k \leq \frac{m}{2} \right \}.
\]
Apply Theorem \ref{thm:MainExpander} with this $A$ and with
\[
h_1=-x_{k+2}, \, h_2=-x_{k+1}, \, h_3=-x_{k}.
\]
It follows that
\[
|4f(X-Y)-3f(X-Y)| \gtrsim |X|^{5/2},
\]
as required. \end{proof}

\section{Proof of Theorem \ref{thm:main}}

We need to take care that a forthcoming application of the logarithmic function is applied only to postitive values. For this reason, we split the set $B$ into two disjoint sets $B_1$ and $B_2$ such that
\[
|B_1|=|B_2| \geq \left \lfloor \frac{|B|}{2}  \right \rfloor
\]
and with the property that the largest element of $B_1$ is smaller than than smallest element of $B_2$.

Write $P= B_1 \times B_1$ and $Q=B_2 \times B_2$.  First, it follows from Theorem \ref{thm:beck} that there exists $(a,b) \in P$ such that $(a,b)$ determines $\Omega(|B|^2)$ directions with the set $Q$. That is,
\begin{equation} \label{quad}
\left | \frac{B_2-b}{B_2-a} \right | \gg |B|^2.
\end{equation}
We can calculate that
\[
\mathcal A( (x,y),(a,b), (x',y'))= \arctan\left ( \frac{y-b}{x-a} \right) - \arctan\left ( \frac{y'-b}{x'-a} \right).
\]
Define
\[
\mathcal A(Q,(a,b),Q)= \{ \mathcal A (p,(a,b),r) : p, r \in Q \}.
\]
Then
\begin{equation} \label{defn}
|\mathcal A(B \times B)| \geq |\mathcal A(Q,(a,b),Q)|= \left | \arctan\left ( \frac{B_2-b}{B_2-a} \right) - \arctan\left ( \frac{B_2-b}{B_2-a} \right) \right |.
\end{equation}
Let $X= \log (B_2-b)$ and $Y= \log(B_2-a)$. Note that these sets are well-defined, since the sets $B_2-a$ and $B_2-b$ consist of strictly positive elements. Applying Theorem \ref{thm:key} with these input sets gives
\[
 \left |4 \arctan\left ( \frac{B_2-b}{B_2-a} \right) - 3\arctan\left ( \frac{B_2-b}{B_2-a} \right) \right | \gtrsim |B|^{5/2}.
\]
Plünnecke's Theorem then gives
\begin{align*}
|B|^{5/2} &  \lesssim  \left |4 \arctan\left ( \frac{B_2-b}{B_2-a} \right) - 3\arctan\left ( \frac{B_2-b}{B_2-a} \right) \right |
\\& \leq \frac{ \left | \arctan\left ( \frac{B_2-b}{B_2-a} \right) -\arctan\left ( \frac{B_2-b}{B_2-a} \right) \right |^7}{ \left | \frac{B_2-b}{B_2-a} \right |^6}.
\end{align*}
Applying \eqref{quad} and recalling \eqref{defn} completes the proof.

\subsection*{Acknowledgements} The author was supported by the Austrian Science Fund FWF Projects P 34180 and PAT 2559123. I am grateful to Krishnendu Bhowmick, Misha Rudnev and Audie Warren for helpfully sharing their insights.

\end{document}